\DeclareMathOperator{\coloneqq}{%
   \mathrel{\mathop:}=\,}
\newcommand{\scfrac}[2]{%
   {\textstyle\frac{\displaystyle\raisebox{0.210ex}{\(#1\)}}{\displaystyle\raisebox{-0.710ex}{\(#2\)}}}%
}
\theoremstyle{plain}
\newtheorem{thm}{Theorem}[section]
\newtheorem{cor}[thm]{Corollary}
\newtheorem{lem}[thm]{Lemma}
\theoremstyle{definition}
\theoremstyle{remark}
\begin{document}
%
\title[ON AN INEQUALITY RELATED TO A CERTAIN FOURIER COSINE SERIES]
      {ON AN INEQUALITY RELATED TO\\ A CERTAIN FOURIER COSINE SERIES}
\author[WOLFGANG GABCKE]{WOLFGANG GABCKE}
\address{Wolfgang Gabcke, Göttingen, Germany}
\email{wolfgang@gabcke.de}

\date{October 17, 2015}
\subjclass[2010]{Primary 26D05, 26D15; Secondary 33C45, 42A16}
\keywords{inequality, Fourier cosine series, Chebyshev Polynomials, generating function}

\begin{abstract}
We prove that the Fourier cosine series
   \begin{align*}
      \sum_{k=1}^{\infty}(-1)^{k+1}\scfrac{r^k\cos{k\phi}}{k+2}
   \end{align*}
assumes its maximum value at \(\phi = 0\) for \(\phi \in [0, \pi)\) regardless of \(r\) if \(r \in (0, 1]\). This
was first proved by Arias de Reyna and van de Lune. The more compact proof presented here is based on a
ge\-ne\-ra\-ting function of the Chebyshev Polynomials.
\end{abstract}
%
\maketitle
%
%
%
%
%
%
%
\section{Reformulation of the Inequality}
\noindent
Arias de Reyna and van de Lune \cite{Rey} estabished and proved the inequality
\begin{thm}\label{t0001}
If \(r \in (0, 1]\) and \(\phi \in (0, \pi)\) then
   \begin{align}
   \label{m0009}
      \sum_{k=1}^{\infty}(-1)^{k+1}\scfrac{r^k\cos{k\phi}}{k+2}<
      \sum_{k=1}^{\infty}(-1)^{k+1}\scfrac{r^k}{k+2}.
   \end{align}
\end{thm}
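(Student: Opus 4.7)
\medskip

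\noindent\textbf{Proof plan.} The plan is to convert the series on the left of \eqref{m0009} into a closed-form integral by means of the standard Chebyshev generating function, after which \eqref{m0009} will reduce to a trivial pointwise estimate of the two integrands.

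Writing $x=\cos\phi$ so that $\cos k\phi = T_{k}(x)$, I would begin from
$$\sum_{k=1}^{\infty} T_{k}(x)\,u^{k} = \frac{u(x-u)}{1-2ux+u^{2}},$$
the well-known generating function with its $k=0$ term subtracted off. Setting $u=-t$ and multiplying by $t$ yields
$$\sum_{k=1}^{\infty} (-1)^{k+1}\,t^{k+1}\,T_{k}(x) = \frac{t^{2}(x+t)}{1+2tx+t^{2}}.$$
Integrating term-by-term from $0$ to $r$, which is justified by absolute convergence for $r \in (0,1)$ and extended to $r=1$ by a standard Abel/Dirichlet argument (valid since $\phi\in(0,\pi)$), produces the representation
$$\sum_{k=1}^{\infty} (-1)^{k+1}\,\frac{r^{k}\cos k\phi}{k+2} = \frac{1}{r^{2}}\int_{0}^{r} \frac{t^{2}(\cos\phi+t)}{1+2t\cos\phi+t^{2}}\,dt.$$
In particular, the right-hand side of \eqref{m0009} is the value at $\phi=0$, namely $r^{-2}\int_{0}^{r} t^{2}/(1+t)\,dt$, as can be seen directly by noting that $1+2t+t^{2}=(1+t)^{2}$.

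Thus \eqref{m0009} is equivalent to the integral inequality
$$\int_{0}^{r} \frac{t^{2}(\cos\phi+t)}{1+2t\cos\phi+t^{2}}\,dt < \int_{0}^{r} \frac{t^{2}}{1+t}\,dt,$$
which I would derive from the elementary algebraic identity
$$\frac{1}{1+t}-\frac{\cos\phi+t}{1+2t\cos\phi+t^{2}}=\frac{(1-\cos\phi)(1-t)}{(1+t)(1+2t\cos\phi+t^{2})}.$$
The denominator on the right is strictly positive for all $t\ge 0$ and $\phi\in[0,\pi)$, while the numerator is strictly positive on $(0,1)\times(0,\pi)$ and vanishes at $t=1$. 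Multiplying by $t^{2}>0$ and integrating over $(0,r)$ for any $r\in(0,1]$ delivers the claimed strict inequality, since the integrand is non-negative throughout and strictly positive on a set of positive measure.

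The only real obstacle is guessing the correct integral representation; once it is found, the remainder is bookkeeping. A pleasant feature of the factorization $(1-\cos\phi)(1-t)$ is that it makes transparent why the hypothesis $r\le 1$ is sharp: for $t>1$ the pointwise estimate reverses in sign, so the argument would fail for any $r>1$ as soon as the integration range extended past the unit interval.
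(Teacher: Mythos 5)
Your argument is correct. It shares the paper's key lemma --- the integral representation
\[
\sum_{k=1}^{\infty}(-1)^{k+1}\frac{r^{k}\,T_{k}(x)}{k+2}
   =\frac{1}{r^{2}}\int_{0}^{r}t^{2}\,\frac{t+x}{t^{2}+2xt+1}\,dt,
   \qquad x=\cos\phi,
\]
obtained in exactly the same way from the Chebyshev generating function --- but it finishes differently. The paper differentiates under the integral sign with respect to \(x\), obtains \(\partial f/\partial x=r^{-2}\int_{0}^{r}t^{2}(1-t^{2})\,[t^{2}+2xt+1]^{-2}\,dt>0\), and concludes that \(f(\cdot,r)\) is monotonically increasing on \((-1,1]\), so the inequality follows by comparison with the endpoint \(x=1\). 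You instead compare the two integrands pointwise via the factorization
\(\frac{1}{1+t}-\frac{\cos\phi+t}{1+2t\cos\phi+t^{2}}
   =\frac{(1-\cos\phi)(1-t)}{(1+t)(1+2t\cos\phi+t^{2})}\),
which I have verified and which is strictly positive for \(t\in(0,1)\) and \(\phi\in(0,\pi)\), since \(1+2t\cos\phi+t^{2}>(1-t)^{2}\ge 0\) there. Your route is marginally more elementary (no differentiation under the integral sign is needed), and the factor \(1-t\) makes the role of the hypothesis \(r\le 1\) visible in the computation; the paper's route proves slightly more, namely monotonicity of the sum in \(\cos\phi\), which compares any two values of \(\phi\) rather than only \(\phi\) with \(0\). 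Two small remarks: your handling of \(r=1\) by Abel summation is at least as careful as the paper's ``obviously this equation remains valid''; and your closing sentence establishes only that this particular pointwise estimate reverses for \(t>1\), not that the theorem's hypothesis is sharp --- as phrased (``the argument would fail'') it is accurate, but it should not be overread as a claim about the theorem itself.
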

To get a simpler proof of this theorem it is convenient to eliminate the cosine
function by setting \(x \coloneqq \cos{\phi}\). Then \(-1 < x < 1\) and
   \begin{align}
   \label{m0001}
      \cos{k\phi} = T_k{(\cos{\phi})} = T_k{(x)},
   \end{align}
where \(T_k{(x)}\) is the \(k\)-th Chebyshev Polynomial of the first kind. By this substitution we obtain
the function of two variables
   \begin{gather}
   \label{m0002}
      \quad f(x, r) \coloneqq \sum_{k=1}^{\infty}(-1)^{k+1}\scfrac{r^k\,T_k{(x)}}{k+2}\quad
                                              \rlap{\(\:\bigl(r \in (0, 1],\:x \in (-1, 1]\bigr)\)}
   \end{gather}
from the left side of \eqref{m0009}, where we have included the value \(x = 1\) additionally. This function satisfies
\begin{lem}\label{t0002}
\(f(x, r)\) is a monotonically increasing function of \(x\). So the inequality
   \begin{align}
   \label{m0003}
      f(x, r) < f(1, r) = \sum_{k=1}^{\infty}(-1)^{k+1}\scfrac{r^k}{k+2}
                           \raisebox{1.5ex}{\footnotemark}
                        = \left(\log{(1+r)} - r + \scfrac{r^2}{2}\right)\! /\, r^2
   \end{align}
holds for \(r \in (0, 1]\) and \(x \in (-1, 1)\).
\end{lem}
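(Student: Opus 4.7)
The plan is to establish strict monotonicity of $f(\,\cdot\,, r)$ on $(-1, 1)$ by writing $\partial f/\partial x$ as an integral with a manifestly nonnegative integrand that is strictly positive on a set of positive measure. Once that is done, $f(x, r) < f(1, r)$ for $x \in (-1, 1)$ is immediate, and the closed form for $f(1, r)$ is read off from $\log(1+r) = \sum_{n \ge 1}(-1)^{n-1} r^n/n$ by peeling off the $n = 1$ and $n = 2$ terms and dividing by $r^2$.

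To set up the representation I would use $1/(k+2) = \int_0^1 t^{k+1}\,dt$, interchange sum and integral (justified by the uniform boundedness of the partial sums, since $|T_k(x)| \le 1$ and the closed form below is continuous on $[0, 1]$), and apply the Chebyshev generating function $\sum_{k \ge 0} T_k(x)\, s^k = (1 - xs)/(1 - 2xs + s^2)$ at $s = -rt$. This yields
\begin{align*}
    \sum_{k=1}^{\infty}(-1)^{k+1}(rt)^k T_k(x) = \frac{rt\,(x + rt)}{1 + 2xrt + (rt)^2},
\end{align*}
and the substitution $u = rt$ gives $f(x, r) = r^{-2}\!\int_0^r u^2(x + u)/(1 + 2xu + u^2)\,du$.

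The crux of the argument is the algebraic identity
\begin{align*}
    \frac{u^2(x + u)}{1 + 2xu + u^2} = \frac{u}{2} - \frac{u(1 - u^2)}{2(1 + 2xu + u^2)},
\end{align*}
verified by clearing denominators. Substituting this back and evaluating the elementary $\int_0^r (u/2)\,du = r^2/4$ leaves $f(x, r) = 1/4 - (2r^2)^{-1}\!\int_0^r u(1 - u^2)/(1 + 2xu + u^2)\,du$. Differentiating under the integral sign in $x$ then produces
\begin{align*}
    \frac{\partial f}{\partial x}(x, r) = \frac{1}{r^2}\int_0^r \frac{u^2(1 - u^2)}{(1 + 2xu + u^2)^2}\,du.
\end{align*}
The denominator equals $(u + x)^2 + (1 - x^2)$, which is strictly positive for $|x| < 1$, and $1 - u^2 \ge 0$ precisely when $u \le 1$; this is where the hypothesis $r \le 1$ is essential. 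The integrand is then nonnegative and strictly positive on $(0, \min(r, 1))$, so $\partial f/\partial x > 0$, which proves the monotonicity claim.

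The main obstacle is spotting the decomposition on which everything hinges; once that identity is in hand, the sign analysis is immediate and the role of $r \le 1$ becomes transparent through the factor $1 - u^2$. Proceeding instead by direct integration in $x$ would produce logarithms and arctangents whose combined sign is not obvious, which is exactly what this rearrangement is designed to avoid.
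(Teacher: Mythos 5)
Your proposal is correct and follows essentially the same route as the paper: both derive the integral representation \(f(x,r)=r^{-2}\int_0^r u^2(x+u)/(1+2xu+u^2)\,du\) from the Chebyshev generating function and then differentiate under the integral sign to obtain \(\partial f/\partial x = r^{-2}\int_0^r u^2(1-u^2)/(1+2xu+u^2)^2\,du>0\). The only cosmetic differences are your use of \(1/(k+2)=\int_0^1 t^{k+1}\,dt\) instead of term-by-term integration of the generating function, and your intermediate partial-fraction decomposition, which the paper bypasses by differentiating the quotient directly.
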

\footnotetext{ We have \(T_k{(1)} = 1\) for all \(k\).}
\noindent
From this, theorem \ref{t0001} above follows immediately.
%
%
%
%
%
%
\section{An Integral Representation of the Series}
\noindent
As we can see in \cite{Rey}, trying to prove lemma \ref{t0002} from definition \eqref{m0002} directly must be hard. So we are looking for a different representation of \(f(x, r)\).
Indeed, there is such a representation as a definite integral.
\begin{lem}\label{t0003}
   \begin{align}
   \label{m0004}
      f(x, r) = \scfrac{1}{r^2}\!\int\limits_{\!\!\!\!0}^{\;\,\,r}\!t^2 \scfrac{t+x}{t^2+2xt+1}\,dt.
   \end{align}
\end{lem}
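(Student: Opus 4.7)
The plan is to recognize that $r^k/(k+2)$ can itself be written as an integral, swap summation and integration, and then sum the resulting series in closed form by means of the classical generating function for the Chebyshev polynomials of the first kind,
\[
   \sum_{k=0}^{\infty} T_k(x)\,u^k \;=\; \frac{1-ux}{1-2ux+u^2}\qquad (|u| < 1,\ |x| \le 1).
\]
Subtracting the $k = 0$ term and substituting $u = -t$ transforms this into the alternating closed form
\[
   \sum_{k=1}^{\infty}(-1)^{k+1}T_k(x)\,t^k \;=\; \frac{t(t+x)}{t^2 + 2tx + 1}\qquad (|t| < 1,\ |x| \le 1),
\]
which is exactly the expression needed to produce the integrand in \eqref{m0004}.

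Concretely, starting from \eqref{m0002} and using the elementary identity $r^k/(k+2) = r^{-2}\int_0^r t^{k+1}\,dt$, I would write
\[
   f(x,r) \;=\; \frac{1}{r^2}\sum_{k=1}^{\infty}(-1)^{k+1}T_k(x)\int_0^r t^{k+1}\,dt,
\]
then interchange sum and integral, pull a factor of $t$ out of the sum, and apply the closed form above to conclude
\[
   f(x,r) \;=\; \frac{1}{r^2}\int_0^r t\cdot\frac{t(t+x)}{t^2+2tx+1}\,dt \;=\; \frac{1}{r^2}\int_0^r \frac{t^2(t+x)}{t^2+2tx+1}\,dt,
\]
which is precisely \eqref{m0004}.

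The step requiring care is the interchange of sum and integral. For $r \in (0, 1)$ this is routine: the estimate $|(-1)^{k+1}T_k(x)\,t^{k+1}| \le t^{k+1} \le r^{k+1}$ (using $|T_k(x)| \le 1$ on $[-1, 1]$) gives uniform convergence on $[0, r]$ by geometric domination, so term-by-term integration is justified. The boundary case $r = 1$ is handled by continuity: the denominator $t^2 + 2tx + 1 = (t + x)^2 + (1 - x^2)$ is bounded away from $0$ on $[0, 1]$ for every $x \in (-1, 1]$ (equal to $(t+1)^2$ at $x = 1$), so the right-hand side of \eqref{m0004} is continuous in $r$ on $(0, 1]$; Abel's theorem shows the left-hand side is continuous there too, since the series defining $f(x, 1)$ converges by Dirichlet's test (the partial sums of $\sum (-1)^{k+1} T_k(x) = \sum (-1)^{k+1}\cos k\phi$ are bounded for $\phi \in [0, \pi)$, and $1/(k+2)$ decreases to $0$). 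The identity on $(0, 1)$ therefore extends to $r = 1$. I expect this endpoint bookkeeping to be the only real obstacle; the algebraic core of the proof is immediate once the generating function is in hand.
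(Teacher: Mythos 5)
Your proof is correct and follows essentially the same route as the paper: the same Chebyshev generating function, subtraction of the $k=0$ term, the substitution $u=-t$, and term-by-term integration over $[0,r]$. Your treatment of the interchange of sum and integral and of the endpoint $r=1$ (via Abel's theorem and Dirichlet's test) is in fact more careful than the paper's, which dismisses the $r=1$ case with ``obviously.''
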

\begin{proof}
The series in \eqref{m0002} looks similar to one of the generating functions of the
Chebyshev Polynomials of the first kind. We take equation 22.9.9 from \cite{Abr}\footnote{ This can be
shown by setting \(z = r\,e^{i\phi}\) (\(0 \le r < 1\), \(0 \le \phi \le \pi\)) in the geometric
series \(1/(1-z)=\sum_{k=0}^{\infty}z^k\), taking the real part and using \eqref{m0001}.}
\begin{align*}
      \scfrac{1-xz}{1-2xz+z^2} = \sum_{k=0}^{\infty}\,T_k{(x)}\,z^k\rlap{\(\qquad\qquad\,\,
      (|z| < 1, |x| \le 1)\).}
\end{align*}
After moving the constant term \(T_0(x) \equiv 1\) of the series to the left side and setting \(z = -r\) we obtain
\begin{align*}
      r\,\scfrac{r+x}{r^2+2xr+1} = \sum_{k=1}^{\infty}(-1)^{k+1}\,T_k{(x)}\,r^k.
\end{align*}
Multiplying this equation by \(r\), writing \(t\) for \(r\) and integrating from \(0\) to \(r\) over
\(t\) yields
\begin{align*}
      \int\limits_{\!\!\!\!0}^{\;\,\,r}\!t^2 \scfrac{t+x}{t^2+2xt+1}\,dt = \sum_{k=1}^{\infty}(-1)^{k+1}
      \scfrac{r^{k+2}}{k+2}\,T_k{(x)} = r^2f(x, r)
\end{align*}
as required. Obviously this equation remains valid for \(r = 1\) if \(x \in (-1, 1]\).
\end{proof}
So we get the
\begin{proof}[Proof of Lemma \ref{t0002}]
Differentiation of \eqref{m0004} with respect to \(x\) shows that
\begin{align*}
      \scfrac{\partial}{\partial x}f(x, r) = \scfrac{1}{r^2}\!\int\limits_{\!\!\!\!0}^{\;\,\,r}\!t^2
      \scfrac{1-t^2}{[t^2+2xt+1]^2}\,dt > 0
\end{align*}
since the integrand is positive for \(t \in (0, r)\). Therefore \(f(x, r)\) is a monotonically increasing
function of \(x\) and in particular inequality \eqref{m0003} must hold. The explicit expression of \(f(1, r)\)
follows from the logarithm series.
\end{proof}
\noindent
Thus theorem \ref{t0001} is proved.
\begin{cor}
The integral in \eqref{m0004} can be solved, giving
   \begin{align*}
      f(x, r) = \scfrac{1}{r^2}\Bigl[\scfrac{r^2}{2}-xr+\Bigl(x^2-\scfrac{1}{2}\Bigr)
                \log{\!(r^2+2xr+1)}
                +2xw\arctan{\!\Bigl(\scfrac{wr}{1+xr}\Bigr)}\Bigr]
   \end{align*}
with the abbreviation \(w \coloneqq \sqrt{1-x^2}\).
\end{cor}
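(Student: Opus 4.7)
The plan is to evaluate the definite integral
\[
I(x, r) \coloneqq \int_{0}^{r}\! t^{2}\,\frac{t+x}{t^{2}+2xt+1}\,dt
\]
from lemma \ref{t0003} by reducing it to elementary antiderivatives, and then to divide by \(r^{2}\) to obtain the claimed closed form.

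First I would carry out polynomial long division of the numerator \(t^{3}+xt^{2}\) by the denominator \(t^{2}+2xt+1\). A short computation gives
\[
t^{3}+xt^{2} = (t-x)(t^{2}+2xt+1) + (2x^{2}-1)t + x,
\]
so the integrand splits as \(t-x\) plus a proper rational function. The polynomial part contributes \(r^{2}/2 - xr\) after integration over \([0,r]\).

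Next I would decompose the proper rational part by writing \((2x^{2}-1)t + x\) as a linear combination of the derivative \(2t+2x\) of the denominator and the constant \(2x(1-x^{2}) = 2xw^{2}\). Concretely,
\[
(2x^{2}-1)t + x = \bigl(x^{2}-\tfrac{1}{2}\bigr)(2t+2x) + 2xw^{2}.
\]
The first piece integrates immediately to \(\bigl(x^{2}-\tfrac12\bigr)\log(t^{2}+2xt+1)\), which contributes \(\bigl(x^{2}-\tfrac12\bigr)\log(r^{2}+2xr+1)\) after evaluation (the value at \(t=0\) is \(\log 1 = 0\)). For the second piece I would complete the square \(t^{2}+2xt+1 = (t+x)^{2}+w^{2}\) and obtain the antiderivative \(2xw\arctan\bigl((t+x)/w\bigr)\).

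The step requiring a touch of care is simplifying the arctangent contribution. Evaluating between \(0\) and \(r\) gives
\[
2xw\Bigl[\arctan\tfrac{r+x}{w} - \arctan\tfrac{x}{w}\Bigr],
\]
and I would apply the subtraction identity \(\arctan A - \arctan B = \arctan\!\bigl((A-B)/(1+AB)\bigr)\) with \(A=(r+x)/w\), \(B=x/w\). Using \(w^{2}=1-x^{2}\) one finds \(1+AB = (1+xr)/w^{2}\), so the bracket collapses to \(\arctan\bigl(rw/(1+xr)\bigr)\), which is exactly the form stated in the corollary. The admissibility of the identity is the only real subtlety: for \(x\in(-1,1)\) and \(r\in(0,1]\) one has \(1+xr>0\), so the argument stays in the principal branch and no additive \(\pi\) correction is needed. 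Collecting all four contributions and dividing by \(r^{2}\) yields the displayed formula; the degenerate cases \(x=\pm 1\) (where \(w=0\)) are covered by interpreting the arctangent term as its limit \(0\), and one checks that \(f(1,r)\) agrees with the value already recorded in \eqref{m0003}.
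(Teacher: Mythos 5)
Your derivation is correct and complete: the paper states this corollary without proof, and your computation (polynomial division, splitting the remainder into a multiple of the derivative of the denominator plus $2xw^2$, completing the square, and the arctangent subtraction identity with the check that $1+xr>0$ keeps you on the principal branch) is exactly the routine evaluation the paper leaves to the reader. Your final consistency check that $f(1,r)$ reduces to the expression in \eqref{m0003} is a nice touch.
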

\noindent
But this representation does not help to prove lemma \ref{t0002} since its derivative with respect
to \(x\) takes on a very complicated shape.
%
%
%
%
%
%

\end{document}